
\documentclass[final,11pt]{siamltex}

\usepackage{graphicx}
\usepackage{color}
\usepackage{bm}
\usepackage{amsfonts}

\newcommand{\tr}{^{\sf T}}

\newcommand{\C}[1]{{\cal {#1}}}
\newcommand{\Supp}{\mbox{supp}}

\input epsf

\newtheorem{remark}{Remark}


\title{
Projection Algorithms for Nonconvex Minimization
with Application to Sparse Principal Component Analysis
\thanks{Author names are ordered alphabetically.
March 28, 2015, revised December 25, 2015.
The authors gratefully acknowledge support by
National Science Foundation under grants
1115568 and 1522629 and by Office of Naval Research under
grants N00014-11-1-0068 and N00014-15-1-2048.} }

\author{
        William W. Hager$^*$\thanks{{\tt hager@ufl.edu},
        http://people.clas.ufl.edu/hager/,
        PO Box 118105,
        Department of Mathematics,
        University of Florida, Gainesville, FL 32611-8105.
        Phone (352) 294-2308. Fax (352) 392-8357.}
\and
        Dzung T. Phan$^*$\thanks{{\tt phandu@us.ibm.com},
        IBM T.~J.~Watson Research Center,
        Yorktown Heights, NY 20598.
        Phone (914) 945-1883.}
\and
        Jiajie Zhu$^*$\thanks{{\tt zplusj@ufl.edu},
        http://people.clas.ufl.edu/zplusj/
        PO Box 118105,
        Department of Mathematics,
        University of Florida, Gainesville, FL 32611-8105.
        Currently in the Max Planck Institute for Intelligent Systems, Germany.}
}

\begin{document}

\maketitle

\begin{abstract}
We consider concave minimization problems over nonconvex sets.
Optimization problems with this structure arise in sparse
principal component analysis.
We analyze both a gradient projection algorithm and an approximate
Newton algorithm where the Hessian approximation is a multiple of the identity.
Convergence results are established.
In numerical experiments arising in sparse principal component analysis,
it is seen that the performance of the
gradient projection algorithm is very similar to that of the truncated
power method and the generalized power method.
In some cases, the approximate Newton algorithm with
a Barzilai-Borwein (BB) Hessian approximation and a nonmonotone line search
can be substantially faster than the other algorithms, and can converge
to a better solution.
\end{abstract}

\begin{keywords}
sparse principal component analysis, gradient projection,
nonconvex minimization, approximate Newton, Barzilai-Borwein method
\end{keywords}


\pagestyle{myheadings} \thispagestyle{plain}
\markboth{W. W. HAGER, D. T. PHAN,  AND J. J. ZHU}
{PROJECTION ALGORITHMS FOR NONCONVEX MINIMIZATION}

\section{Introduction}
Principal component analysis (PCA) is an extremely popular
tool in engineering and statistical analysis.
It amounts to computing the singular vectors associated with the
largest singular values.
In its simplest setting, the rank-one approximation,
amounts to solving an optimization problem of the form
\begin{equation} \label{PCA}
\max \{ x\tr \Sigma x : x \in \mathbb{R}^n, \quad \|x\| = 1 \},
\end{equation}
where $\Sigma = A\tr A$ is the covariance matrix associated with the
data matrix $A \in \mathbb{R}^{m \times n}$ and $\| \cdot \|$ is the
Euclidean norm.
As pointed out in \cite{luss2013},
there is no loss of generality in assuming that $\Sigma$ is
positive definite since
\[
x\tr \Sigma x + \sigma = x\tr (\Sigma + \sigma I) x
\]
whenever $x$ is feasible in (\ref{PCA}).

The lack of interpretability has been a major concern in PCA.
Sparse PCA partly addresses this problem by constraining
the number of nonzero components of the maximizing $x$ in (\ref{PCA}).
Given a positive integer $\kappa$,
the sparse PCA problem associated with (\ref{PCA}) is
\begin{equation} \label{sparsePCA}
\max \{ x\tr\Sigma x : x \in \mathbb{R}^n, \quad
\|x\| =1, \quad \|x\|_0 \leq \kappa \},
\end{equation}
where $\| x \|_0$ denotes the number of nonzero components of $x$.
Due to the sparsity constraint in (\ref{sparsePCA}), the
feasible set is no longer convex, which makes the
optimization problem more difficult.

A large quantity of literatures focus on using the sparsity norms, such as the $l_1$-norm and $l_0$-norm, in the formulation. Such methods are popular in signal processing and statistics community. Notably, the lasso-related methods appeared in studies such as \cite{chen1998atomic,efron2004least,donoho2006compressed}.
In \cite{cadima1995loading} PCA loadings smaller than a certain tolerance are
simply set to zero to produce sparse principal components.
More recently, optimization-based approaches have been used to introduce sparsity.
For example, in \cite{lt11convex} sparsity is achieved using an
$l_1$ relaxation.
That is, the problem (\ref{sparsePCA}) is replaced by
\begin{equation} \label{L1PCA}
\max \{ x\tr\Sigma x : x \in \mathbb{R}^n, \quad
\|x\| =1, \quad \|x\|_1^2 \leq \kappa \},
\end{equation}
where $\| x \|_1 = |x_1| + |x_2| + \ldots + |x_n|$.
The solution of the relaxed problem (\ref{L1PCA}) yields an upper bound
for the solution of (\ref{sparsePCA}).
In \cite{jolliffe03} the Rayleigh quotient problem subject to an
$l_1$-constraint is successively maximized using the authors'
SCoTLASS algorithm.
In \cite{zou06spca}, 
the authors formulate a regression problem and propose numerical algorithms
to solve it.
Their approach can be applied to large-scale data, but it is computationally
expensive.
In \cite{d08optimal} a new semi-definite relaxation is formulated
and a greedy algorithm is developed that computes a full set of good
solutions for the target number of non-zero coefficients.
With total complexity $O(n^3)$, the algorithm is computationally expensive.
Other references related to sparse optimization include
\cite{Clarkson10, Hazan12, Jaggi13, jenatton2010,Lacoste12, Sriperumbudur11}.

Our work is largely motivated by \cite{journee2010},
\cite{luss2013}, and \cite{yuan2013}.
In \cite{journee2010} both $l_1$-penalized and $l_0$-penalized
sparse PCA problems are considered and
a generalized power method is developed.
The numerical experiments show that their approach outperforms
earlier algorithms both in solution quality and in computational speed.
Recently, \cite{luss2013} and \cite{yuan2013} both consider the
$l_0$-constrained sparse PCA problem and propose
an efficient truncated power method.
Their algorithms are equivalent and originated
from the classic Frank-Wolfe \cite{fra-wol:56} conditional gradient algorithm.

In this paper, we study both the gradient projection algorithm and
an approximate Newton algorithm.
Convergence results are established and numerical experiments are
given for sparse PCA problems of the form (\ref{sparsePCA}).
The algorithms have the same iteration complexity
as the fastest current algorithms, such as the algorithms proposed in \cite{journee2010}, \cite{luss2013} and \cite{yuan2013}, which is $O(\kappa n)$ in solving problem (\ref{sparsePCA}).
The gradient projection algorithm with unit step size has nearly
identical performance as that of the conditionl gradient algorithm with unit step-size (ConGradU) \cite{luss2013} and the truncated power method (Tpower) \cite{yuan2013}.
On the other hand, the approximate Newton algorithm
can often converge faster to a better objective value than the other
algorithms.

The paper is organized as follows.
In Section~\ref{GP} we analyze the gradient projection algorithm when
the constraint set is nonconvex.
Section~\ref{Scale} introduces and analyzes the approximate Newton scheme.
Section~\ref{numerical}
examines the performance of the algorithms
in some numerical experiments based on
classic examples found in the sparse PCA literature.

{\bf Notation.} If $f:R^n\to R$ is differentiable,
then $\nabla f (x)$ denotes the gradient of $f$, a row vector,
while $g (x)$ denotes the gradient of $f$ arranged as a column vector.
The subscript $k$ denotes the iteration number.
In particular, $x_k$ is the $k$-th $x$-iterate and $g_k = g(x_k)$.
The $i$-th element of the $k$-th iterate is denoted $x_{ki}$.
The Euclidean norm is denoted $\|\cdot \|$, while
$\|\cdot \|_0$ denotes cardinality (number of non-zero elements).
If $x \in \mathbb{R}^n$, then the support of $x$ is the set of
indices of nonzeros components:
\[
\Supp (x) = \{ i : x_i \ne 0 \} .
\]
If $\Omega \subset \mathbb{R}^n$, then $\mbox{conv}(\Omega)$ is
the convex hull of $\Omega$.
If $\C{S} \subset \{1, 2, \ldots, n \}$, then $x_\C{S}$ is
the vector obtained by replacing $x_i$ for $i \in \C{S}^c$ by 0.
If $\C{A}$ is a set, then $\C{A}^c$ is its complement.
%
\section{Gradient projection algorithm}
\label{GP}
Let us consider an optimization problem of the form
\begin{equation}\label{pb}
\min \{ f(x) : x \in \Omega \},
\end{equation}
where $\Omega \subset \mathbb{R}^n$ is a nonempty, closed set and
$f : \Omega \to \mathbb{R}$ is differentiable on $\Omega$.
Often, the gradient projection algorithm is presented in the
context of an optimization problem where the feasible set $\Omega$ is convex
\cite{beck2003mirror,be82,hz05a}.
Since the feasible set for the sparse PCA problem (\ref{sparsePCA})
is nonconvex, we will study the gradient projection algorithm for
a potentially nonconvex feasible set $\Omega$.

The projection of $x$ onto $\Omega$ is defined by
\[
P_\Omega (x) = \arg\min_{y\in \Omega} \| x-y\|.
\]
For the constraint set $\Omega$ that arises in sparse PCA,
the projection can be expressed as follows:
\smallskip

\begin{proposition}
\label{projOmega}
For the set
\begin{equation}\label{omega}
\Omega = \{ x \in \mathbb{R}^n :
\|x\| =1, \quad \|x\|_0 \leq \kappa \},
\end{equation}
where $\kappa$ is a positive integer, we have
\[
T_\kappa (x)/\|T_\kappa (x)\| \in P_\Omega (x),
\]
where $T_\kappa (x)$ is the vector obtained from $x$ by replacing
$n - \kappa$ elements of $x$ with smallest magnitude by $0$.
\end{proposition}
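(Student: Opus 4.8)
The plan is to reduce the projection to the maximization of a linear functional over $\Omega$ and then to solve that problem in two stages: first over the values on a fixed support, and then over the choice of support. First I would exploit the constraint $\|y\| = 1$ that holds for every $y \in \Omega$ to write $\|x - y\|^2 = \|x\|^2 - 2x\tr y + 1$. Since $\|x\|^2 + 1$ does not depend on $y$, minimizing $\|x - y\|$ over $y \in \Omega$ is equivalent to maximizing $x\tr y$ over $y \in \Omega$. This removes the quadratic dependence on $y$ and leaves a problem that splits cleanly according to the support of $y$.

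Next I would fix a support set $\C{S} \subseteq \{1, \ldots, n\}$ with $|\C{S}| \le \kappa$ and maximize $x\tr y$ over all unit vectors $y$ supported on $\C{S}$. By the Cauchy--Schwarz inequality, $x\tr y = \sum_{i \in \C{S}} x_i y_i \le \left( \sum_{i \in \C{S}} x_i^2 \right)^{1/2}$, with equality precisely when $y_\C{S}$ points in the direction of $x_\C{S}$. Hence the largest inner product achievable on support $\C{S}$ is $\left( \sum_{i \in \C{S}} x_i^2 \right)^{1/2}$, and maximizing this over admissible supports reduces to selecting the $\kappa$ indices for which $x_i^2$, equivalently $|x_i|$, is largest. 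That optimal support is exactly $\Supp(T_\kappa(x))$, and the maximizer on it is $T_\kappa(x)/\|T_\kappa(x)\|$, which is therefore an element of $P_\Omega(x)$.

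Because each individual step is elementary, the real care lies in the edge cases rather than in any deep estimate. The maximizing support need not be unique: when several components of $x$ share the cutoff magnitude, there are several valid choices of $T_\kappa(x)$, each attaining the same optimal value, which is why the statement asserts membership $T_\kappa(x)/\|T_\kappa(x)\| \in P_\Omega(x)$ rather than equality with the entire projection set. I would also flag the degenerate case $x = 0$, for which $T_\kappa(x) = 0$, the normalization is undefined, and in fact every point of $\Omega$ lies at distance $1$ from $x$; so the stated formula tacitly assumes $x \ne 0$. The one point deserving a clean argument is that the two-stage maximization does compute the global maximum over all of $\Omega$: this holds because $\Omega$ is the union, over all supports $\C{S}$ with $|\C{S}| \le \kappa$, of the corresponding unit spheres, so the maximum over $\Omega$ equals the maximum over supports of the maximum over each sphere. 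With that decomposition in hand, the two stages combine to give the global optimum and the proof is complete.
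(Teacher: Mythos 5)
Your proof is correct and takes essentially the same approach as the paper's: both reduce the projection, via $\|x-y\|^2 = \|x\|^2 + 1 - 2\langle x, y\rangle$ on the unit sphere, to maximizing $\langle x, y\rangle$ over $\Omega$, then decompose by support, apply the Schwarz inequality on each fixed support to get the value $\|x_{\mathcal{S}}\|$, and select the support of the $\kappa$ largest-magnitude components. Your added remarks on non-uniqueness of the maximizing support and the degenerate case $x = 0$ are correct observations that the paper leaves implicit.
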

\begin{proof}
If $y \in \Omega$, then $\|x - y\|^2 = \|x\|^2 + 1 - 2\langle x, y \rangle$.
Hence, we have
\begin{equation}\label{s0}
P_\Omega (x) = \arg \; \min \{ -\langle x , y \rangle : \|y\| = 1,
\quad \|y\|_0 \le \kappa \}.
\end{equation}
In \cite[Prop. 4.3]{luss2013}, it is shown that the minimum is attained
at $y = T_\kappa (x)/\|T_\kappa (x)\|$.
We include the proof since it is short and we need to refer to it later.
Given any set $\C{S} \subset \{1, 2, \ldots , n\}$, the solution of the
problem
\[
\min \{ - \langle x , y \rangle : \|y\| = 1, \quad \Supp (y) = \C{S} \}
\]
is $y = x_\C{S}/\|x_\C{S}\|$ by the Schwarz inequality, and the
corresponding objective value is $-\|x_\C{S}\|$, where $x_\C{S}$ is
the vector obtained by replacing $x_i$ for $i \in \C{S}^c$ by 0.
Clearly, the minimum is attained when $\C{S}$ is the set of
indices of $x$ associated with the $\kappa$ absolute largest components.
\end{proof}

In general, when $\Omega$ is closed, the projection exists,
although it may not be unique when $\Omega$ is nonconvex.
If $x_k \in \Omega$ is the current iterate,
then in one of the standard implementations
of gradient projection algorithm, $x_{k+1}$ is obtained by a line search
along the line segment connecting $x_k$ and
$P_\Omega (x_{k}-s_k g_k)$, where $g_k$ is the gradient at $x_k$ and
$s_k > 0$ is the step size.
When $\Omega$ is nonconvex, this line segment is not always contained
in $\Omega$.
Hence, we will focus on gradient projection algorithms of the form
\begin{equation}\label{gradproj}
x_{k+1} \in P_\Omega (x_{k}-s_k g_k) .
\end{equation}
Since $\Omega$ is closed, $x_{k+1}$ exists for each $k$.
We first observe that $x_{k+1} - x_k$ always forms an
obtuse angle with the gradient, which guarantees descent when $f$ is concave.
\smallskip

\begin{lemma}
\label{descent}
If $x_k \in \Omega$, then
\begin{equation}\label{p1}
\nabla f(x_k) (y - x_k) \le 0 \quad \mbox{for all }
y \in P_\Omega (x_{k}-s_k g_k) .
\end{equation}
In particular, for $y = x_{k+1}$ this gives
\begin{equation}\label{<=0}
\nabla f(x_k) (x_{k+1} - x_k) \le 0 .
\end{equation}
If $f$ is concave over {\rm conv}$(\Omega)$, then
$f(x_{k+1}) \le f(x_k)$.
\end{lemma}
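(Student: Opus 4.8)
The plan is to exploit the defining optimality property of the projection: since $y \in P_\Omega(x_k - s_k g_k)$ minimizes the distance to $x_k - s_k g_k$ over all of $\Omega$, and since $x_k$ itself lies in $\Omega$, the point $y$ must be at least as close to $x_k - s_k g_k$ as $x_k$ is. This single comparison drives the entire argument; everything else is algebra. Note in particular that this comparison requires only $x_k \in \Omega$, not convexity of $\Omega$, which is why the descent inequality survives in the nonconvex setting.

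First I would write down the inequality
\[
\|(x_k - s_k g_k) - y\|^2 \le \|(x_k - s_k g_k) - x_k\|^2 ,
\]
valid because $x_k$ is a competitor in the minimization defining the projection. Expanding the left-hand side as $\|x_k - y\|^2 - 2 s_k \langle g_k, x_k - y\rangle + s_k^2 \|g_k\|^2$ and observing that the right-hand side equals $s_k^2 \|g_k\|^2$, the terms $s_k^2 \|g_k\|^2$ cancel, leaving
\[
\|x_k - y\|^2 \le 2 s_k \langle g_k, x_k - y\rangle .
\]
Since $s_k > 0$ and the left-hand side is nonnegative, this forces $\langle g_k, x_k - y\rangle \ge 0$, i.e. $\langle g_k, y - x_k\rangle \le 0$. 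Recalling that $\nabla f(x_k)(y - x_k) = g_k\tr(y - x_k) = \langle g_k, y - x_k\rangle$ (the row/column relationship between $\nabla f$ and $g$), this is precisely (\ref{p1}), and the special case $y = x_{k+1}$ gives (\ref{<=0}).

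For the final claim I would invoke the first-order characterization of concavity: if $f$ is concave over ${\rm conv}(\Omega)$, then for any two points $u, v$ in that convex set one has $f(v) \le f(u) + \nabla f(u)(v - u)$. Applying this with $u = x_k$ and $v = x_{k+1}$ — both of which lie in $\Omega \subseteq {\rm conv}(\Omega)$ — and then discarding the gradient term via (\ref{<=0}) yields
\[
f(x_{k+1}) \le f(x_k) + \nabla f(x_k)(x_{k+1} - x_k) \le f(x_k) .
\]

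I do not anticipate a serious obstacle; the result is a projection inequality followed by a concavity inequality. The only point demanding care is the bookkeeping of where each hypothesis is used: convexity of $\Omega$ is never invoked, since the projection comparison needs only $x_k \in \Omega$, so the angle condition (\ref{<=0}) holds despite the nonconvexity of $\Omega$; concavity of $f$ over the convex hull (the natural domain on which the gradient inequality is meaningful) is what converts that angle condition into the monotone decrease of the objective in the last step.
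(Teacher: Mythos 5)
Your proof is correct and follows essentially the same route as the paper: both arguments compare $y$ against the competitor $x_k$ in the minimization defining $P_\Omega(x_k - s_k g_k)$, and then convert the resulting angle condition into descent via the first-order concavity inequality. The only difference is the intermediate algebra --- the paper bounds $g_k\tr\bigl(y - (x_k - s_k g_k)\bigr)$ with the Schwarz inequality, whereas you expand the squared norms, which incidentally yields the slightly stronger quantitative bound $\nabla f(x_k)(y - x_k) \le -\|y - x_k\|^2/(2 s_k)$, the nonconvex analogue of the paper's inequality (\ref{convex}).
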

\begin{proof}
If $y \in P_\Omega (x_k - s_k g_k)$,
then since $P_\Omega (x_k - s_k g_k)$ is set of
elements in $\Omega$ closest to $x_k - s_k g_k$,
we have
\begin{equation} \label{xk+1}
\|y - (x_k - s_k g_k)\| \le \|x_k -(x_k - s_k g_k)\| = s_k\|g_k\| .
\end{equation}
By the Schwarz inequality and (\ref{xk+1}), it follows that
\[
g_k\tr (y - (x_k - s_k g_k)) \le
\|g_k\| \|y - (x_k - s_k g_k) \| \le s_k \|g_k\|^2 .
\]
We rearrange this inequality to obtain (\ref{p1}).
If $f$ is concave over $\mbox{conv}(\Omega)$, then
\begin{equation}\label{concave}
f(x_{k+1}) \le f(x_k) + \nabla f(x_k) (x_{k+1} - x_k) .
\end{equation}
By (\ref{<=0}), $f(x_{k+1}) \le f(x_k)$.
\end{proof}

The following result is well known.
\smallskip

\begin{proposition}
\label{vi}
If $f:R^n\to R$ is concave and $\Omega \subset \mathbb{R}^n$, then
\begin{equation}\label{conv}
\inf\{f(x) | x\in \Omega \}=\inf\{f(x) | x\in \mbox{\rm conv}(\Omega) \},
\end{equation}
where the first infimum is attained only when the second infimum is attained.
Moreover, if $f$ is differentiable at
$x^* \in \arg \; \min \{f(x) : x \in \Omega\}$, then
\begin{equation} \label{stnry}
\nabla f(x^*) (y-x^*) \ge 0 \quad
\mbox{for all } y\in \mbox{\rm conv} (\Omega).
\end{equation}
\end{proposition}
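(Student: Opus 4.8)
The plan is to prove the two assertions separately. For the infimum equality in~(\ref{conv}), I first observe that $\Omega \subseteq \mbox{conv}(\Omega)$, so minimizing over the larger set can only decrease the infimum; that is,
\[
\inf\{f(x) : x \in \mbox{conv}(\Omega)\} \le \inf\{f(x) : x \in \Omega\}.
\]
For the reverse inequality I would take an arbitrary $y \in \mbox{conv}(\Omega)$ and write it, via Carath\'eodory's theorem, as a finite convex combination $y = \sum_i \lambda_i x_i$ with $x_i \in \Omega$, $\lambda_i \ge 0$, and $\sum_i \lambda_i = 1$. Concavity of $f$ then gives $f(y) \ge \sum_i \lambda_i f(x_i) \ge \inf\{f(x) : x \in \Omega\}$, since each $f(x_i)$ is bounded below by this infimum and the weights sum to one. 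Taking the infimum over $y \in \mbox{conv}(\Omega)$ yields the opposite inequality, so the two infima coincide.

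The attainment claim follows immediately: if the first infimum is attained at some $x^* \in \Omega$, then because $x^* \in \mbox{conv}(\Omega)$ and the two infima are equal, $f(x^*)$ also equals the second infimum, so $x^*$ attains it as well.

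For the variational inequality~(\ref{stnry}), I would use the first part of the proposition to conclude that $x^*$ minimizes $f$ not only over $\Omega$ but over the convex set $\mbox{conv}(\Omega)$. Fixing $y \in \mbox{conv}(\Omega)$, convexity guarantees that the segment $x^* + t(y - x^*)$ lies in $\mbox{conv}(\Omega)$ for all $t \in [0,1]$, so minimality gives $f(x^* + t(y - x^*)) \ge f(x^*)$. Dividing by $t > 0$ and letting $t \to 0^+$, differentiability of $f$ at $x^*$ identifies the limit of the difference quotient as the directional derivative $\nabla f(x^*)(y - x^*)$, which is therefore nonnegative.

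This is a standard argument, so no step is a genuine obstacle. The only point requiring care is the final limit: it is essential that $f$ be differentiable precisely at $x^*$ so that the one-sided difference quotient converges to $\nabla f(x^*)(y - x^*)$, and it is essential that $y$ range over $\mbox{conv}(\Omega)$ rather than $\Omega$ so that the whole segment remains feasible. This feasibility is exactly what the first part of the proposition secures, which is why the two halves of the statement are naturally proved in this order.
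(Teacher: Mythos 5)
Your proof is correct, but it takes a different route from the paper on the first claim. For the equality of infima~(\ref{conv}) and the attainment statement, the paper simply cites Rockafellar \cite[Thm.~32.2]{Rockafellar70}, whereas you give a direct, self-contained argument: the inclusion $\Omega \subseteq \mbox{conv}(\Omega)$ gives one inequality, and the concavity (Jensen) inequality $f\left(\sum_i \lambda_i x_i\right) \ge \sum_i \lambda_i f(x_i) \ge \inf_\Omega f$ applied to finite convex combinations gives the other; attainment then follows in one line since a minimizer over $\Omega$ already lies in $\mbox{conv}(\Omega)$. (Incidentally, Carath\'eodory's theorem is not even needed here: every point of $\mbox{conv}(\Omega)$ is by definition a finite convex combination of points of $\Omega$; Carath\'eodory only bounds the number of terms.) Your elementary proof buys self-containedness and makes clear that nothing beyond the definition of concavity is used; the paper's citation is shorter and leans on a known result. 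For the variational inequality~(\ref{stnry}), both you and the paper argue identically in structure: the first part promotes $x^*$ to a minimizer over the convex set $\mbox{conv}(\Omega)$, and then the standard first-order optimality condition for minimization over a convex set applies --- you merely spell out the directional-derivative limit that the paper invokes as a known fact. One small point of care that you handled implicitly: when $\inf_\Omega f = -\infty$, the Jensen step is vacuous but the conclusion still holds via the inclusion inequality alone, so the equality of infima is valid without any finiteness assumption.
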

\begin{proof}
The first result (\ref{conv}) is proved in \cite[Thm.~32.2]{Rockafellar70}.
If $x^*$ minimizes $f(x)$ over $\Omega$, then by (\ref{conv}),
\[
x^* \in \arg \; \min \{f(x) : x \in \mbox{conv}(\Omega)\}.
\]
Since $\mbox{conv}(\Omega)$ is a convex set,
the first-order optimality condition for $x^*$ is (\ref{stnry}).
\end{proof}

\begin{remark}
Note that at a local minimizer $x^*$ of $f$ over a nonconvex set
$\Omega$, the inequality $\nabla f (x^*) (y-x^*) \ge 0$
may not hold for all $y \in \Omega$.
For example, suppose that $f(x) = a\tr x$ where $\nabla f = a$ has
the direction shown in Figure~$\ref{ctex}$.
The point A is a local minimizer of $f$ over $\Omega$, but
($\ref{stnry}$) does not hold.
Hence, Proposition~$\ref{vi}$ is only valid for a global minimizer, as stated.
\end{remark}
\begin{figure}
\begin{center}
\includegraphics[scale=.3]{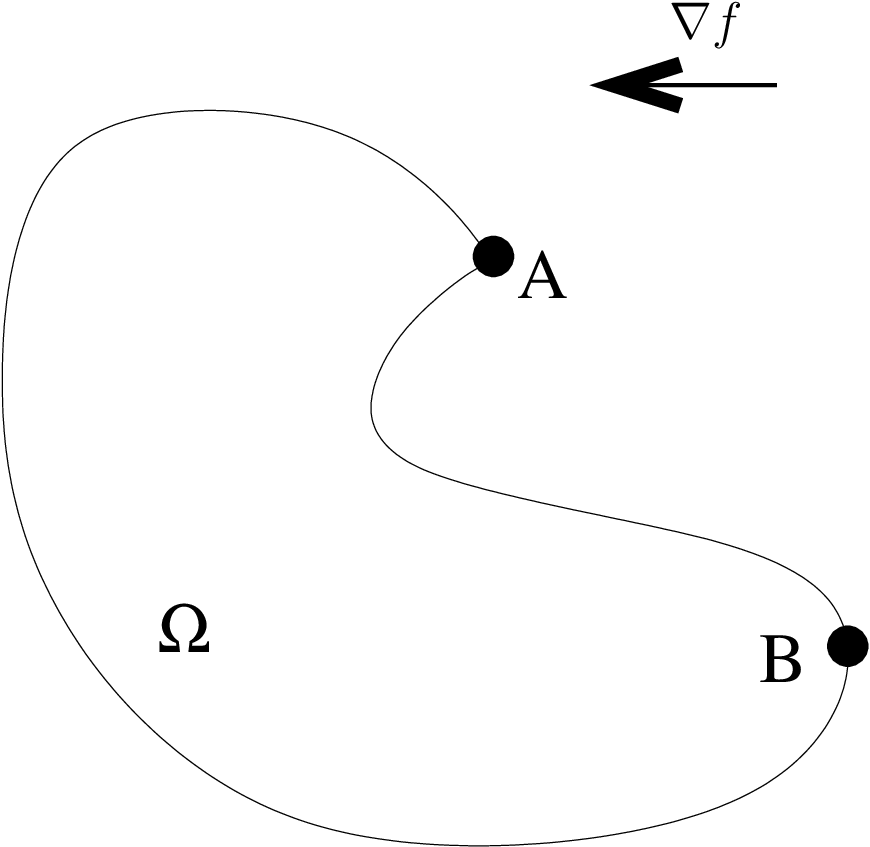}
\caption{Example that shows Proposition~\ref{vi} may not hold for local
minimizers.}
\label{ctex}
\end{center}
\end{figure}

Next, we consider the special choice
$y \in P_\Omega (x^* - s g(x^*))$ in Proposition~\ref{vi}.
\smallskip

\begin{corollary}\label{=0}
If $f:R^n\to R$ is concave and
\[
x^* \in \arg \; \min \{f (x): x \in \Omega\},
\]
then
\begin{equation}\label{=0eq}
\nabla f(x^*) (y - x^*) = 0
\end{equation}
whenever $y \in P_\Omega (x^* - s g(x^*))$ for some $s \ge 0$.
\end{corollary}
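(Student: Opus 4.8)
The plan is to obtain (\ref{=0eq}) by sandwiching the scalar $\nabla f(x^*)(y - x^*)$ between two opposite inequalities, each already available from an earlier result. Lemma~\ref{descent} will supply the upper bound $\nabla f(x^*)(y - x^*) \le 0$, while the stationarity condition (\ref{stnry}) in Proposition~\ref{vi} will supply the reverse bound $\nabla f(x^*)(y - x^*) \ge 0$; together these force the quantity to vanish.

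First I would apply Lemma~\ref{descent} with $x_k = x^*$, $s_k = s$, and $g_k = g(x^*)$. Since $x^*$ minimizes $f$ over $\Omega$, it lies in $\Omega$, so the hypothesis of the lemma is satisfied and (\ref{p1}) gives
\[
\nabla f(x^*)(y - x^*) \le 0 \quad \mbox{for all } y \in P_\Omega(x^* - s g(x^*)).
\]
For the reverse inequality, I would note that any such $y$ satisfies $y \in \Omega \subset \mbox{conv}(\Omega)$. Because $f$ is concave and $x^*$ is a global minimizer of $f$ over $\Omega$, Proposition~\ref{vi} applies, and (\ref{stnry}) yields $\nabla f(x^*)(y - x^*) \ge 0$ for every such $y$. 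Combining the two inequalities gives (\ref{=0eq}).

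The differentiability of $f$ at $x^*$, which is needed both for $\nabla f(x^*)$ to be defined and for Proposition~\ref{vi} to apply, is implicit in the appearance of $g(x^*)$ in the statement. There is no serious obstacle here: the only point requiring attention is that the single vector $y$ lies simultaneously in the range of the projection, so that Lemma~\ref{descent} applies, and in $\mbox{conv}(\Omega)$, so that Proposition~\ref{vi} applies. This is immediate from $P_\Omega(x^* - s g(x^*)) \subset \Omega$, after which the corollary is just the conjunction of the two earlier results specialized to the minimizer $x^*$.
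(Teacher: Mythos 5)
Your proof is correct and follows essentially the same route as the paper's: the lower bound $\nabla f(x^*)(y - x^*) \ge 0$ from Proposition~\ref{vi} (using $y \in \Omega \subset \mbox{conv}(\Omega)$) combined with the upper bound $\nabla f(x^*)(y - x^*) \le 0$ from Lemma~\ref{descent} applied at $x_k = x^*$. The paper's proof is exactly this sandwich argument, so no further comment is needed.
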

\begin{proof}
By Proposition~\ref{vi}, we have
\[
\nabla f(x^*) (y -x^*)  \ge 0
\]
for all $y \in P_\Omega (x^* -s g(x^* ))$.
On the other hand, by Lemma~\ref{descent} with $x_k = x^*$, we have
\[
\nabla f(x^*) (y -x^*)  \le 0
\]
for all $y \in P_\Omega (x^* -s g(x^* ))$.
Therefore, (\ref{=0eq}) holds.
\end{proof}

The following property for the projection is needed in the main theorem:
\smallskip

\begin{lemma} \label{continuity}
If $\Omega$ is a nonempty closed set, $x_k \in \mathbb{R}^n$ is a
sequence converging to $x^*$ and $y_k \in P_\Omega (x_k)$ is a
sequence converging to $y^*$, then $y^* \in P_\Omega (x^*)$.
\end{lemma}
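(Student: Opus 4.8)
The plan is to establish two things: that the limit point $y^*$ lies in $\Omega$, and that it is a nearest point in $\Omega$ to $x^*$. The first is immediate from the closedness hypothesis, and the second comes from passing the defining inequality of the projection to the limit using continuity of the norm.

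First I would observe that since $y_k \in P_\Omega(x_k)$, each $y_k$ is an element of $\Omega$. Because $\Omega$ is closed and $y_k \to y^*$, it follows that $y^* \in \Omega$. This handles the membership requirement; what remains is to verify the minimizing property, namely that $\|x^* - y^*\| \le \|x^* - z\|$ for every $z \in \Omega$.

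To verify the minimizing property, I would fix an arbitrary $z \in \Omega$. For each $k$, the definition of $P_\Omega(x_k)$ as the set of points of $\Omega$ closest to $x_k$ gives the inequality $\|x_k - y_k\| \le \|x_k - z\|$, since $z \in \Omega$ is a competitor. Now I would let $k \to \infty$. Because $x_k \to x^*$ and $y_k \to y^*$, continuity of the Euclidean norm yields $\|x_k - y_k\| \to \|x^* - y^*\|$ and $\|x_k - z\| \to \|x^* - z\|$, so the inequality is preserved in the limit: $\|x^* - y^*\| \le \|x^* - z\|$. Since $z \in \Omega$ was arbitrary and $y^* \in \Omega$, this shows $y^*$ minimizes the distance from $x^*$ over $\Omega$, i.e. $y^* \in P_\Omega(x^*)$.

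There is no serious obstacle here; the argument is a routine closed-graph property of the (set-valued) projection onto a closed set, and nonconvexity of $\Omega$ plays no role because I never use uniqueness of the projection---only the defining inequality and the continuity of the norm. The one point requiring a little care is simply remembering to invoke closedness of $\Omega$ to guarantee $y^* \in \Omega$, rather than taking the membership for granted.
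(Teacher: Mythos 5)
Your proof is correct and takes essentially the same route as the paper: first conclude $y^* \in \Omega$ from closedness, then use the minimizing property $\|x_k - y_k\| \le \|x_k - z\|$ of the projection and pass to the limit. The only cosmetic difference is that you argue directly against an arbitrary competitor $z \in \Omega$ using continuity of the norm, whereas the paper packages the same limit argument as a proof by contradiction involving $\min_{y \in \Omega}\|y - x^*\|$ and the triangle inequality.
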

\begin{proof}
Since $y_k \in \Omega$ for each $k$ and $\Omega$ is closed,
$y^* \in \Omega$.
Hence, we have
\[
\|y^* - x^*\| \ge \min_{y \in \Omega} \|y - x^*\|.
\]
If this inequality is an equality, then we are done;
consequently, let us suppose that
\[
\|y^* - x^*\| > \min_{y \in \Omega} \|y - x^*\| \ge
\min_{y \in \Omega} \{ \|y - x_k \| - \|x_k - x^*\| \} =
\|y_k - x_k \| - \| x_k - x^*\| .
\]
As $k$ tends to $\infty$, the right side approaches
$\|y^* - x^*\|$, which yields a contradiction.
\end{proof}

We now give further justification for the convergence
of the gradient projection algorithm in the nonconvex setting.
\smallskip

\begin{theorem}
\label{converge}
If $f:R^n\to R$ is concave,
$\Omega$ is a compact nonempty set,
and $x_k$ is generated by the gradient projection algorithm
$(\ref{gradproj})$, then we have $f(x_{k+1}) \le f(x_k)$ for each $k$ and
\begin{equation}\label{liminf}
\lim\limits_{k \to\infty} \nabla f(x_k) (x_{k+1} - x_k) = 0.
\end{equation}
If $x^*$ is the limit of any convergent subsequence of the $x_k$
and the step size $s_k$ approaches a limit $s^*$, then
\begin{equation}\label{s1}
\nabla f(x^*) (y - x^*) \le 0 \quad \mbox{for all }
y \in P_\Omega (x^* - s^* g(x^*)).
\end{equation}
If $f$ is continuously differentiable around $x^*$, then
\begin{equation}\label{s2}
\nabla f(x^*) (y^* - x^*) = 0
\end{equation}
for some $y^* \in P_\Omega (x^* - s^* g(x^*))$.
\end{theorem}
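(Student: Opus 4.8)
The plan is to establish the four assertions in sequence, leaning heavily on Lemma~\ref{descent} and Lemma~\ref{continuity}. The monotone decrease $f(x_{k+1}) \le f(x_k)$ is immediate: every iterate lies in $\Omega$ since $x_{k+1} \in P_\Omega(x_k - s_k g_k) \subseteq \Omega$, and because $f$ is concave on all of $\mathbb{R}^n$ (hence on $\mbox{conv}(\Omega)$), the final assertion of Lemma~\ref{descent} applies at each step.

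For \ref{liminf}, I would first note that compactness of $\Omega$ together with continuity of the concave function $f$ forces $f$ to be bounded below on $\Omega$; since $f(x_k)$ is nonincreasing and bounded below it converges, so $f(x_{k+1}) - f(x_k) \to 0$. The concavity inequality \ref{concave} gives $f(x_{k+1}) - f(x_k) \le \nabla f(x_k)(x_{k+1} - x_k)$, while \ref{<=0} gives $\nabla f(x_k)(x_{k+1} - x_k) \le 0$. Squeezing the inner product between these two bounds yields \ref{liminf}.

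For the subsequential inequality \ref{s1}, the point is that $x^*$ itself lies in $\Omega$, because $\Omega$ is compact and $x^*$ is a limit of points of $\Omega$. Thus Lemma~\ref{descent}, applied at the feasible point $x^*$ with step size $s^*$, immediately furnishes \ref{s1}; this is nothing more than inequality \ref{p1} read at $x^*$.

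The real work is in \ref{s2}, where I must exhibit a specific projection point achieving equality. Along the subsequence with $x_k \to x^*$, I would use compactness of $\Omega$ to extract a further subsequence along which the successors $x_{k+1}$ converge, say to $y^*$. Continuous differentiability of $f$ near $x^*$ gives $g_k \to g(x^*)$, hence $x_k - s_k g_k \to x^* - s^* g(x^*)$; since $x_{k+1} \in P_\Omega(x_k - s_k g_k)$ and $x_{k+1} \to y^*$, Lemma~\ref{continuity} certifies that $y^* \in P_\Omega(x^* - s^* g(x^*))$. Finally, continuity of the gradient lets me pass to the limit in $\nabla f(x_k)(x_{k+1} - x_k)$, obtaining $\nabla f(x^*)(y^* - x^*)$; but this quantity is the limit of a sequence already shown in \ref{liminf} to tend to $0$, so it equals $0$. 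The main obstacle is organizing the nested subsequence extractions cleanly and recognizing that continuous differentiability near $x^*$ is precisely the hypothesis that legitimizes both limit passages: it gives $g_k \to g(x^*)$, needed to apply Lemma~\ref{continuity} to the perturbed points, and $\nabla f(x_k) \to \nabla f(x^*)$, needed to evaluate the limit of the inner products.
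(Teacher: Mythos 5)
Your proposal is correct and follows essentially the same route as the paper's own proof: monotonicity via Lemma~\ref{descent}, the inequality (\ref{s1}) read off from (\ref{p1}) at the feasible limit point $x^*$, and for (\ref{s2}) a compactness extraction of $y^*$ from the successors, Lemma~\ref{continuity} applied to the perturbed points $x_k - s_k g_k \to x^* - s^* g(x^*)$, and a limit passage in the inner products justified by (\ref{liminf}). The only cosmetic difference is in establishing (\ref{liminf}): you use monotone convergence of $f(x_k)$ plus a squeeze between $f(x_{k+1})-f(x_k)$ and $0$, whereas the paper telescopes the concavity inequality (\ref{concave}) into (\ref{e1}) and invokes boundedness below; both arguments rest on exactly the same ingredients, so this is a matter of presentation rather than substance.
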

\begin{proof}
Sum the concavity inequality (\ref{concave}) for $k = 0, 1, \ldots,$ $K-1$
to obtain
\begin{equation}\label{e1}
{f(x_{K})}-{f(x_{0})}\leq \sum_{k=0}^{K-1}
{\nabla f(x_k)} ({x_{k+1}} - {x_k}).
\end{equation}
Since $f$ is continuous and $\Omega$ is compact,
$f^* = \min \{f(x): x \in \Omega\}$ is finite and
\begin{equation}\label{e2}
f^* - f(x_0) \le f(x_K) - f(x_0) .
\end{equation}
Together, (\ref{e1}) and (\ref{e2}) yield (\ref{liminf}) since
$\nabla f(x_k) (x_{k+1} - x_k) \le 0$ for each $k$ by
Lemma~\ref{descent}.

The relation (\ref{s1}) is (\ref{p1}) with $x_k$ replaced by $x^*$.
For convenience, let $x_k$ also denote the subsequence of the iterates
that converges to $x^*$, and let $y_k \in P_\Omega (x_k - s_k g_k)$
denote the iterate produced by $x_k$.
Since $y_k$ lies in a compact set,
there exists a subsequence converging to a limit $y^*$.
Again, for convenience, let $x_k$ and $y_k$ denote this convergent
subsequence.
By (\ref{liminf}) and the fact that $f$ is continuously differentiable
around $x^*$, we have
\[
\lim\limits_{k \to\infty} \nabla f(x_k) (y_{k} - x_k) =
\nabla f(x^*) (y^* - x^*) = 0 .
\]
By Lemma~\ref{continuity}, $y^* \in P_\Omega (x^* - s^* g(x^*))$.
\end{proof}.

\begin{remark}
The inequalities ($\ref{<=0}$), ($\ref{e1}$), and ($\ref{e2}$) imply that
\[
\min_{0 \le k \le K} \nabla f(x_k) (x_k - x_{k+1}) \le
\frac{f(x_0) - f^*}{K+1} .
\]
\end{remark}

When $\Omega$ is convex, much stronger convergence results can be
established for the gradient projection algorithm.
In this case, the projection onto $\Omega$ is unique.
%
By \cite[Prop. 2.1]{hz05a}, for any $x \in \Omega$ and $s > 0$,
$x = P_\Omega (x - s g(x))$ if and only if $x$ is a stationary point
for (\ref{pb}).
That is,
\[
\nabla f (x) (y - x) \ge 0 \quad \mbox{for all } y \in \Omega .
\]
Moreover, when $\Omega$ is convex,
\begin{equation}\label{convex}
\nabla f (x) (P_\Omega (x - s g(x)) - x) \le
-\|P_\Omega (x - \alpha g(x)) - x \|^2/s
\end{equation}
for any $x \in \Omega$ and $s > 0$.
Hence, (\ref{s2}) implies that the left side of (\ref{convex}) vanishes
at $x = x^*$, which means that $x^* = P_\Omega (x^* - s g(x^*))$.
And conversely, if $x^* = P_\Omega (x^* - s g(x^*))$, then (\ref{s2}) holds.

\section{Approximate Newton algorithm}
\label{Scale}
To account for second-order information,
Bertsekas \cite{be82} analyzes the following version of the
gradient projection method:
\[
x_{k+1} \in P_\Omega (x_{k}-s_k \nabla^2 f(x_k)^{-1} g_k) .
\]
Strong convergence results can be established when $\Omega$ is convex and
$f$ is strongly convex.
On the other hand, if $f$ is concave, local minimizers are extreme points
of the feasible set, so the analysis is quite different.
Suppose that $\nabla^2 f(x_k)$ is approximated by a multiple
$\alpha_k$ of the identity matrix as is done in the BB method \cite{bb88}.
This leads to the approximation
\begin{equation}\label{Qmodel}
f(x) \approx f(x_k) + \nabla f(x_k) (x - x_k) + \frac{\alpha_k}{2} \|x-x_k\|^2 .
\end{equation}
Let us consider the algorithm in which the new iterate $x_{k+1}$ is
obtained by optimizing the quadratic model:
\begin{equation}\label{QP}
x_{k+1} \in \arg \; \min \{ \nabla f(x_k) (x - x_k)
+ \frac{\alpha_k}{2} \|x-x_k\|^2 : x \in \Omega\}
\end{equation}
After completing the square, the iteration is equivalent to
\[
x_{k+1} \in \arg \; \min \left\{ \alpha_k
\|x - (x_k - g_k/\alpha_k) \|^2 : x \in \Omega \right\} .
\]
If $\alpha_k > 0$, then this reduces to
$x_{k+1} \in P_\Omega (x_k - g_k/\alpha_k)$;
in other words, perform the gradient projection algorithm with
step size $1/\alpha_k$.
If $\alpha_k < 0$, then the iteration reduces to
\begin{equation}\label{max}
x_{k+1} \in Q_\Omega (x_k - g_k /\alpha_k) ,
\end{equation}
where
\begin{equation}\label{Q}
Q_\Omega (x) = \arg \; \max \{ \|x - y\| : y \in \Omega \}.
\end{equation}
If $\Omega$ is unbounded, then this iteration does not make sense
since the maximum occurs at infinity.
But if $\Omega$ is compact, then the iteration is justified in the sense
that the projection (\ref{Q}) exists and the iteration
is based on a quadratic model of the function, which could be
better than a linear model.

Suppose that $x^* \in Q_\Omega (x^* - g(x^*))/\alpha)$
for some $\alpha < 0$.
Due to the equivalence between (\ref{QP}) and (\ref{max}), it follows that
\begin{equation}\label{x^*}
x^* \in \arg \; \min \{ \nabla f(x^*) (x - x^*)
+ \frac{\alpha}{2} \|x-x^*\|^2 : x \in \Omega\} .
\end{equation}
That is, $x^*$ is a global optimizer of the quadratic objective in (\ref{x^*})
over $\Omega$.
Since the objective in the optimization problem (\ref{x^*}) is concave,
Proposition~\ref{vi} yields
\begin{equation}\label{nc}
\nabla f(x^*)(y - x^*) \ge 0 \quad
\mbox{for all } y\in \mbox{\rm conv} (\Omega).
\end{equation}
Hence, fixed points of the iteration (\ref{max}) satisfy the necessary
condition (\ref{nc}) associated with a global optimum of (\ref{pb}).

In the special case where $\Omega$ is the constraint set (\ref{omega})
appearing in sparse PCA and $\alpha_k < 0$, the maximization in
(\ref{Q}) can be evaluated as follows:
\smallskip

\begin{proposition}
\label{antiprojOmega}
For the set $\Omega$ in $(\ref{omega})$ associated with sparse PCA, we have
\[
-T_\kappa (x)/\|T_\kappa (x)\| \in Q_\Omega (x).
\]
\end{proposition}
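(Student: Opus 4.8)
For $\Omega = \{x \in \mathbb{R}^n : \|x\| = 1, \|x\|_0 \le \kappa\}$, we want to show that $-T_\kappa(x)/\|T_\kappa(x)\| \in Q_\Omega(x)$, where $Q_\Omega(x) = \arg\max\{\|x-y\| : y \in \Omega\}$ and $T_\kappa(x)$ zeros out the $n-\kappa$ smallest-magnitude entries of $x$.

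**Structure of the argument.** This is the "anti-projection" analogue of Proposition~\ref{projOmega}, so the natural plan is to mirror that proof with maximization replacing minimization. The key observation is the same expansion: for any $y \in \Omega$, since $\|y\| = 1$, we have $\|x - y\|^2 = \|x\|^2 + 1 - 2\langle x, y\rangle$. Because $\|x\|^2 + 1$ is constant over the feasible set, maximizing $\|x-y\|$ is equivalent to \emph{minimizing} $\langle x, y\rangle$, i.e. maximizing $-\langle x, y\rangle$. Thus I would first reduce the problem to
\[
Q_\Omega(x) = \arg\max\{-\langle x, y\rangle : \|y\| = 1,\ \|y\|_0 \le \kappa\}.
\]

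**Filling in the core.** Now the argument parallels Proposition~\ref{projOmega} exactly. For a fixed support set $\C{S}$ with $|\C{S}| \le \kappa$, the inner problem $\max\{-\langle x, y\rangle : \|y\| = 1, \Supp(y) = \C{S}\}$ is solved by the Schwarz inequality: the maximizer is $y = -x_\C{S}/\|x_\C{S}\|$ (the sign flip relative to the projection case), giving optimal value $\|x_\C{S}\|$. Maximizing $\|x_\C{S}\|$ over all admissible supports $\C{S}$ then forces $\C{S}$ to be the set of indices of the $\kappa$ largest-magnitude components of $x$, which is precisely the support of $T_\kappa(x)$. Substituting this optimal support yields $y = -T_\kappa(x)/\|T_\kappa(x)\| \in Q_\Omega(x)$, as claimed.

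**Where the care is needed.** There is no serious obstacle here — the structural reduction is identical to the projection case and the only change is a global sign. The main points to handle cleanly are the constant-shift reduction (noting $\|x\|^2 + 1$ is independent of $y$, so maximizing a distance becomes minimizing an inner product) and the sign reversal in the Schwarz step. As with Proposition~\ref{projOmega}, I would phrase the conclusion as membership rather than equality, since the maximizer need not be unique: ties among the magnitudes of the entries of $x$ (or a vanishing $T_\kappa(x)$) can create multiple optimal supports, so $Q_\Omega(x)$ may be a set and we only assert that the specified vector lies in it.
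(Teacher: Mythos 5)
Your proof is correct and follows essentially the same route as the paper's: the expansion $\|x-y\|^2 = \|x\|^2 + 1 - 2\langle x,y\rangle$ reducing the problem to maximizing $-\langle x,y\rangle$ over $\Omega$, the Schwarz-inequality solution $y = -x_{\C{S}}/\|x_{\C{S}}\|$ with value $\|x_{\C{S}}\|$ on a fixed support $\C{S}$, and the choice of $\C{S}$ as the indices of the $\kappa$ absolutely largest components. Your closing remark about non-uniqueness (hence membership rather than equality) is a sensible clarification that the paper leaves implicit.
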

\begin{proof}
As in the proof of Proposition~\ref{projOmega},
$\|x - y\|^2 = \|x\|^2 + 1 - 2\langle x, y \rangle$ when $y$ lies in the
set $\Omega$ of (\ref{omega}).
Hence, we have
\[
Q_\Omega (x) = \arg \; \max \{ -\langle x , y \rangle : \|y\| = 1,
\quad \|y\|_0 \le \kappa \}.
\]
Given any set $\C{S} \subset \{1, 2, \ldots , n\}$, the solution of the
problem
\[
\max \{ - \langle x , y \rangle : \|y\| = 1, \quad \Supp (y) = \C{S} \}
\]
is $y = -x_\C{S}/\|x_\C{S}\|$ by the Schwarz inequality, and the
corresponding objective value is $\|x_\C{S}\|$.
Clearly, the maximum is attained when $\C{S}$ corresponds to a set of
indices of $x$ associated with the $\kappa$ absolute largest components.
\end{proof}

Let us now study the convergence of the
iterates generated by the quadratic model (\ref{QP}).
The case where $\alpha_k > 0$ corresponds to the gradient projection
algorithm which was studied in Section~\ref{GP}.
In this section, we focus on $\alpha_k < 0$ and
the iteration $x_{k+1} \in Q_\Omega (x_k - g_k/\alpha_k)$.
For the numerical experiments, we employ a
BB-approximation \cite{bb88} to the Hessian given by
\begin{equation}\label{BB}
\alpha_k^{BB} =
\frac{(\nabla f(x_k) - \nabla f (x_{k-1}))(x_k -x_{k-1})}
{\|x_k - x_{k-1}\|^2} .
\end{equation}
%
It is well known that the BB-approximation performs much better
when it is embedded in a nonmonotone line search.
This leads us to study a scheme based on the GLL stepsize rule of \cite{gll86}.
Let $f^{\max}_k$ denote the largest of the $M$ most recent function values:
\[
f^{\max}_k = \max \{ f(x_{k-j}) : 0 \le j < \min(k,M) \}.
\]
Our convention is that $f_k^{\max} = \infty$ when $M = 0$.
The nonmonotone approximate Newton algorithm that we analyze is as follows:
\bigskip

{\tt
\begin{tabular}{l}
{\sc Nonmonotone Approximate Newton (for strongly concave $f$)}\\
\hline
Given $\sigma \in (0,1)$,
$[\alpha_{\min}, \alpha_{\max}] \subset (-\infty, 0)$,
and starting guess $x_0$. \\
Set $k = 0$.\\
\begin{tabular}{rl}
Step 1. & Choose $\beta_k \in [\alpha_{\min} , \alpha_{\max} ]$\\
Step 2. & Set $\alpha_k = \sigma^j \beta_k$
where $j \ge 0$ is the smallest integer \\
& such that \\[.1in]
& $f (x_{k+1}) \le f^{\max}_k + (\alpha_k/2)
\|{x}_{k+1} - {x}_k\|^2 $ where\\[.1in]
& ${x}_{k+1} \in Q_\Omega (x_k - g_k/\alpha_k)$\\[.1in]
Step 3. & If a stopping criterion is satisfied, terminate. \\
Step 4. & Set $k=k+1$ and go to step 1. \\
\end{tabular}\\
\hline \\
\end{tabular}}
\bigskip

Note that the approximate Newton algorithm is monotone when the memory $M = 1$.
If $M = 0$, then the stepsize acceptance condition in Step~2 is satisfied
for $j = 0$ and $\alpha_k = \beta_k$.
Hence, when $M = 0$ there is no line search.
To ensure that $\beta_k$ lies in the
safe-guard interval $[\alpha_{\min}, \alpha_{\max}]$ when using
the BB-approximation to the Hessian, it is common to take
\[
\beta_k = \mbox{mid }\{ \alpha_{\min} , \alpha_k^{BB}, \alpha_{\max} \},
\]
where mid denotes median or middle.
In the numerical experiments, $\alpha_{\min}$ is large in magnitude
and $\alpha_{\max}$ is close to 0, in which case
the safe guards have no practical significance; nonetheless, they enter into
the convergence theory.

In the approximate Newton algorithm, we make a starting
guess $\beta_k$ for the initial $\alpha_k$
and then increase $\alpha_k$ until the line search
acceptance criterion of Step~2 is fulfilled.
We first show that for a strongly concave function, the line
search criterion is satisfied for $j$ sufficiently large,
and the stepsize is uniformly bounded away from zero.
\smallskip

\begin{proposition} \label{stepsize}
If $f$ is differentiable on a compact set $\Omega\subset \mathbb{R}^n$
and for some $\mu < 0$, we have
\begin{equation}\label{concave3}
f(y) \le f(x) + \nabla f(x) (y-x) + \frac{\mu}{2}\|y-x\|^2 \quad
\mbox{for all } x \mbox{ and } y \in \Omega,
\end{equation}
then Step~$2$ in the approximate Newton algorithm terminates
with a finite $j$, and with $\alpha_k$ bounded away from $0$, uniformly in $k$.
In particular, we have $\alpha_k \le$ $\bar{\alpha} :=$
$\max (\sigma \mu/2,\alpha_{\max}) < 0$.
\end{proposition}
\smallskip

\begin{proof}
Since $\Omega$ is compact, the set $Q_\Omega (x_k - g_k/\alpha_k)$ is
nonempty for each $k$.
If $y \in Q_\Omega (x_k - g_k/\alpha_k)$, then
since $Q_\Omega (x_k - s_k g_k)$ is set of
elements in $\Omega$ farthest from $x_k - g_k/\alpha_k$, we have
\[
\|y - (x_k - g_k/\alpha_k)\|^2 \ge \|x_k -(x_k - g_k/\alpha_k)\|^2
= \|g_k\|^2/\alpha_k^2 .
\]
Rearrange this inequality to obtain
\begin{equation}\label{3.5}
\nabla f(x_k) (y - x_k) + \frac{\alpha_k}{2}\|y - x_k\|^2 \le 0
\quad \mbox{for all }
y \in Q_\Omega (x_{k}-g_k/\alpha_k).
\end{equation}
Substitute $y = x_{k+1}$ and $x = x_k$ in (\ref{concave3}) and in (\ref{3.5}),
and add to obtain
\begin{eqnarray}
f(x_{k+1}) &\le& f(x_k) +
\left( \frac{\mu - \alpha_k}{2}\right) \|x_{k+1}-x_k\|^2 \nonumber \\
&\le& f_k^{\max} +
\left( \frac{\mu - \alpha_k}{2}\right) \|x_{k+1}-x_k\|^2. \label{h47}
\end{eqnarray}
If $0 > \alpha_k \ge \mu/2$, then $\mu - \alpha_k \le \alpha_k$.
Hence, by (\ref{h47}), Step~2 must terminate whenever $\alpha_k \ge \mu/2$.
Since $\sigma \in (0,1)$, it follows that Step~2 terminates for a finite $j$.
If Step~2 terminates when $j > 0$, then
$\sigma^{j-1} \beta < \mu/2$, which implies that
$\alpha_k = \sigma^{j} \beta < \sigma\mu/2$.
If Step~2 terminates when $j = 0$, then $\alpha_k \le \alpha_{\max} < 0$.
In either case, $\alpha_k \le \bar{\alpha}$.
\end{proof}
\smallskip

In analyzing the convergence of the approximate Newton algorithm,
we also need to consider the continuity of the $Q_\Omega$ operator.
\smallskip

\begin{lemma} \label{continuityQ}
If $\Omega$ is a nonempty compact set, $x_k \in \mathbb{R}^n$ is a
sequence converging to $x^*$ and $y_k \in Q_\Omega (x_k)$ is a
sequence converging to $y^*$, then $y^* \in Q_\Omega (x^*)$.
\end{lemma}
\smallskip

\begin{proof}
Since $y_k \in \Omega$ for each $k$ and $\Omega$ is closed,
$y^* \in \Omega$.
Hence, we have
\[
\|y^* - x^*\| \le \max_{y \in \Omega} \|y - x^*\|.
\]
If this inequality is an equality, then we are done;
consequently, let us suppose that
\[
\|y^* - x^*\| < \max_{y \in \Omega} \|y - x^*\| \le
\max_{y \in \Omega} \{ \|y - x_k \| + \|x_k - x^*\| \} =
\|y_k - x_k \| + \| x_k - x^*\| .
\]
As $k$ tends to $\infty$, the right side approaches
$\|y^* - x^*\|$, which yields a contradiction.
\end{proof}

The following theorem establishes convergence of the approximate
Newton algorithm when $f$ is strongly concave.

\begin{theorem}\label{nonmonotone_theorem}
If $f$ is continuously differentiable on a compact set
$\Omega\subset \mathbb{R}^n$
and $(\ref{concave3})$ holds for some $\mu < 0$, then the sequence of
objective values $f(x_k)$ generated by the nonmonotone approximate
Newton algorithm converge to a limit $f^*$ as $k$ tends to infinity.
If $x^*$ is any accumulation point of the iterates $x_k$, then
$x^* \in Q_\Omega (x^* - g(x^*)/\alpha)$ for some $\alpha < 0$,
which implies that $(\ref{x^*})$ and $(\ref{nc})$ hold.
\end{theorem}
\smallskip

\begin{proof}
By Proposition~\ref{stepsize}, the stepsize generated by the approximate
Newton algorithm satisfies
$\alpha_k \le \bar{\alpha} < 0$.
By the line search criterion and the fact that $\alpha_k < 0$,
$f(x_{k+1}) \le f_k^{\max}$, which implies that
$f_{k+1}^{\max} \le f_k^{\max}$ for all $k$.
Since the $f_k^{\max}$ are monotone decreasing and bounded from below,
there exists a limit $f^*$.
Since $f$ is continuously differentiable on the compact set $\Omega$,
it follows that $f$ is uniformly continuous on $\Omega$.
Since the stepsize $\alpha_k$ is uniformly bounded away from zero,
the same argument used in the proof of \cite[Lem.~4]{wright09a}
shows that $f(x_k)$ converges to $f^*$ and $\|x_{k+1} - x_k\|$ tends to zero.

Select any $L$ satisfying
$\bar{\alpha} < L < 0$ and let $y_k$ be given by
\begin{equation}\label{part0}
y_k \in \arg \; \min \{ \nabla f(x_k) (y - x_k)
+ \frac{L}{2} \|y-x_k\|^2 : y \in \Omega\} .
\end{equation}
It follows that
\begin{eqnarray}
&\nabla f(x_k) (x_{k+1} - x_k) + \frac{L}{2} \|x_{k+1}-x_k\|^2  \ge&
\nonumber \\
&\nabla f(x_k) (y_{k} - x_k) + \frac{L}{2} \|y_{k}-x_k\|^2 .&
\label{part1}
\end{eqnarray}
Since $x_{k+1}$ satisfies (\ref{QP}), we have
\begin{eqnarray}
&\nabla f(x_k) (y_{k} - x_k) + \frac{\alpha_k}{2} \|y_{k}-x_k\|^2  \ge&
\nonumber \\
&\nabla f(x_k) (x_{k+1} - x_k) + \frac{\alpha_k}{2} \|x_{k+1}-x_k\|^2 .&
\label{part2}
\end{eqnarray}
Add (\ref{part1}) and (\ref{part2}) to obtain
\begin{eqnarray*}
&\nabla f(x_k) (x_{k+1} - x_k) + \frac{L}{2} \|x_{k+1}-x_k\|^2  \ge& \\
&\nabla f(x_k) (x_{k+1} - x_k) + \frac{\alpha_k}{2} \|x_{k+1}-x_k\|^2
+ \frac{L-\alpha_k}{2} \|y_k - x_k \|^2.&
\end{eqnarray*}
Since $0 > L > \bar{\alpha} \ge
\alpha_k \ge \alpha_{\min} > -\infty$ and $\|x_{k+1} - x_k\|$ approaches zero,
we conclude that $\|y_k -x_k\|$ tends to zero.
Hence, if $x^*$ is an accumulation point of the iterates $x_k$,
then $x^*$ is an accumulation point of the $y_k$.
Due to the equivalence between (\ref{QP}) and (\ref{max}), it follows
from (\ref{part0}) that $y_k \in Q_\Omega (x_k - g_k/L)$.
Since $g$ is continuous,
$x^* - g(x^*)/L$ is an accumulation point of $x_k - g_k/L$.
By Lemma~\ref{continuityQ}, we have
$x^* \in Q_\Omega (x^* - g(x^*)/L)$, which completes the proof.
\end{proof}

Due to the equivalence between the inclusion
$x^* \in Q_\Omega (x^* - g(x^*))/\alpha)$ and the necessary condition
(\ref{nc}) for a global optimum, the change $E_k = \|x_{k+1} - x_k\|$
associated with the iteration $x_{k+1} \in Q_\Omega (x_k - g_k/\alpha_k)$
could be used to assess convergence of the approximate Newton algorithm.
That is, if $x_{k+1} = x_k$, then $x^* = x_k$ satisfies
the necessary condition (\ref{nc}) for a global optimizer of (\ref{pb}).
As observed in Theorem~\ref{nonmonotone_theorem}, $E_k = \|x_{k+1} - x_k\|$
tends to zero.
We now analyze the convergence rate of $E_k$.
\begin{theorem} \label{rate}
If $f$ is continuously differentiable on a compact set
$\Omega\subset \mathbb{R}^n$
and $(\ref{concave3})$ holds for some $\mu < 0$, then there exists a constant
$c$, independent of $k$, such that
\[
\min \{ E_j : 0 \le j < kM \} \le \frac{c}{\sqrt{k}} .
\]
\end{theorem}
\smallskip

\begin{proof}
Let $\ell (k)$ denote the index associated with the $M$ most recent
function values:
\[
\ell (k) = \arg \; \max \{ f(x_j) : \max\{0, k-M\} < j \le k \} .
\]
In the approximate Newton algorithm, an acceptable step must satisfy the
condition
\begin{equation}\label{accept}
f (x_{k+1}) \le f^{\max}_k + (\alpha_k/2) E_k^2.
\end{equation}
If $k = \ell (j+M) - 1$, then
\begin{equation}\label{j+M}
f(x_{k+1}) = f(x_{\ell(j+M)}) = f_{j+M}^{\max} .
\end{equation}
Since $j < \ell(j+M)$, it follows that
$j-1 < \ell(j+M)- 1 = k$, or $j \le k$.
During the proof of Theorem~\ref{nonmonotone_theorem}, we observed that
the $f_k^{\max}$ sequence is monotone decreasing.
Consequently, $f_k^{\max} \le f_j^{\max}$ since $j \le k$.
Use this inequality and (\ref{j+M}) in (\ref{accept}) to obtain
\begin{equation}\label{fj}
f_{j+M}^{\max} \le f_j^{\max} + (\alpha_k/2) E_k^2,
\end{equation}
where $k = \ell(j+M)- 1$.
Choose $m > 0$ and sum the inequality (\ref{fj})
for $j = 0, M, 2M, \ldots, (m-1)M$.
We have
\begin{equation}\label{f*}
f^* \le f_{mM}^{\max} \le f(x_0) + \sum_{i = 1}^{m}
(\alpha_{k_i}/2) E_{k_i}^2 ,
\end{equation}
where $(i-1)M \le k_i < iM$ and $f^*$ is the limit of the $f_k^{\max}$.
Observe that
\[
\frac{1}{m} \sum_{i=1}^m E_{k_i}^2 \ge
\min_{1 \le i \le m} E_{k_i} \ge
\min_{0 \le i < mM} E_i^2.
\]
Combine this relationship with the bound $\alpha_{k_i} \le \bar{\alpha}$
of Proposition~\ref{stepsize} and (\ref{f*}) to obtain
\[
\min_{0 \le i < mM} E_i^2 \le
\left( \frac{2(f(x_0) - f^*)}{\bar{\alpha}} \right)
\frac{1}{m} .
\]
This completes the proof.
\end{proof}

\section{Numerical experiments}
\label{numerical}
We will investigate the performance of the gradient project and
approximate Newton algorithm relative to previously developed algorithms
in the literature.
In our experiments we use the gradient projection algorithm with
unit stepsize (GPU):
\[
x_{k+1} \in P_\Omega (x_{k}-g_k) .
\]
And in our experiments with the approximate Newton algorithm,
we employ the BB approximation (\ref{BB}) for the
initial stepsize $\beta_k$.
The memory is $M = 50$ and $\sigma = 0.25$.
This version of the approximate Newton algorithm is denoted GPBB.
For the set $\Omega$ associated with sparse PCA, we have
\[
T_\kappa (x)/\|T_\kappa (x)\| \in P_\Omega (x) \quad \mbox{and} \quad
-T_\kappa (x)/\|T_\kappa (x)\| \in Q_\Omega (x)
\]
by Propositions~\ref{projOmega} and \ref{antiprojOmega} respectively.

We compare the performance of our algorithms to those of both
the truncated power method (Tpower) \cite{yuan2013} and the generalized
power method (Gpower) \cite{journee2010}.
The conditional gradient algorithm with unit step-size (ConGradU)
proposed in \cite{luss2013} is equivalent to the truncated power method.
Both truncated and generalized power method are targeted to
the sparse PCA problem (\ref{sparsePCA}).
The truncated power method handles the sparsity constraint by pushing
the absolute smallest components of the iterates to 0.
The iteration can be expressed
\begin{equation}\label{it1}
x_{k+1} = \frac{T_\kappa (-g_k)}{\|T_\kappa(-g_k)\|} .
\end{equation}
For comparison,
an iteration of the gradient projection algorithm with unit step size GPU
is given by
\begin{equation}\label{it2}
x_{k+1} = \frac{T_\kappa (x_k - g_k)}{\|T_\kappa(x_k - g_k)\|} ,
\end{equation}
while the approximate Newton algorithm is
\begin{equation}\label{it3}
x_{k+1} = \mbox{sgn} (\alpha_k) \frac{T_\kappa (x_k - g_k /\alpha_k^{BB})}
{\|T_\kappa(x_k - g_k \alpha_k^{BB})\|},
\end{equation}
where $\mbox{sgn} (\alpha) = 1, 0, -1$ depending on whether
$\alpha > 0$, $\alpha = 0$, or $\alpha < 0$ respectively.
Since the computation of $\alpha_k^{BB}$ requires the gradient at two
points, we start GPBB with one iteration of GPU.
For the sparse PCA problem (\ref{sparsePCA}),
the time for one iteration of any of these methods is basically the time
to multiply a vector by the covariance matrix $\Sigma$.
Note that the monotone approximate Newton algorithm could be more costly
since the evaluation of an acceptable $j$ may require several evaluations
of the objective function.

In the generalized power method, the sparsity constraint is handled using
a penalty terms.
If $\gamma > 0$ denotes the penalty, then Gpower$_{l_1}$ corresponds to
the optimization problem
\[
\max_{\|x\|=1} \sqrt{x^\top \Sigma x}-\gamma\|x\|_1,
\]
where $\|x\|_1 = |x_1| + |x_2| + \ldots + |x_n|$, while
Gpower$_{l_0}$ corresponds to
\[
\max_{\|x\|=1} {x^\top \Sigma x}-\gamma\|x\|_0.
\]
The parameter $\gamma$ needs to be tuned to achieve the desired
cardinality;
as $\gamma$ increases, the cardinality of the Gpower approximation
decreases.
In contrast, the cardinality is an explicit input parameter for
either the truncated power method or for our algorithms;
in many applications, cardinality is often specified.

All experiments were conducted using MATLAB on a GNU/Linux computer
with 8GB of RAM and an Intel Core i7-2600 processor.
For the starting guess in our experiments,
we follow the practice of the Tpower algorithm \cite{yuan2013} and
set $x = e_i$, the $i$-th column of the identity matrix,
where $i$ is the index of the largest diagonal element of
the covariance matrix $\Sigma$.
Our numerical experiments are based on the sparse PCA problem (\ref{sparsePCA}).
We measure the quality of the solution to (\ref{sparsePCA})
computed by any of the methods using
the ratio $x\tr \Sigma x/y\tr \Sigma y$ where $x$ is the sparse first
principal component computed by any of the algorithms for
(\ref{sparsePCA}) and $y$ is the first principal component
(a solution of (\ref{PCA})).
This ratio is often called the {\it proportion of the explained variance}.

\subsection*{Pit props dataset}
This dataset \cite{Jeffers67}
contains 180 observations with 13 variables,
and a covariance matrix $\Sigma\in R^{13\times 13}$.
This is a standard benchmark dataset for Sparse PCA algorithms.
We consider $\kappa = 6$ or 7, and we adjust the value of $\gamma$
to achieve the same sparsity in Gpower.
The last column of Table~\ref{pitp} gives the proportion of the
explained variance.
Observe that all the methods achieve essentially the same proportion
of the explained variance.
\bigskip
\begin{table}[h]
\caption{Results on Pit props data set.}
\label{pitp}
\begin{center}
{\tt
\begin{tabular}{ccc}
\hline
{\bf Method} & {\bf Parameters} & {\bf Explained Variance}\\
\hline
GPBB & $\kappa=6$ & $0.8939$\\
GPBB & $\kappa=7$ & $0.9473$\\
GPU & $\kappa=6$ & $0.8939$\\
GPU & $\kappa=7$ & $0.9473$\\
Tpower(ConGradU) & $\kappa=6$ & $0.8939$\\
Tpower(ConGradU) & $\kappa=7$ & $0.9473$\\
Gpower$_{l_1}$ & $\gamma=0.5$($\Leftrightarrow \kappa =6$) & $0.8939$\\
Gpower$_{l_1}$ & $\gamma=0.4$($\Leftrightarrow \kappa =7$) & $0.9473$\\
Gpower$_{l_0}$ & $\gamma=0.2$($\Leftrightarrow \kappa =6$) & $0.8939$\\
Gpower$_{l_0}$ & $\gamma=0.15$($\Leftrightarrow \kappa =7$) & $0.9473$\\
\hline
\end{tabular}}
\end{center}
\end{table}
\bigskip

We also considered multiple sparse principal components for this
data set and got the same results as those obtained in
Table~2 of \cite{zou06spca} for the Tpower and PathSPCA algorithms.
Similarly, for the lymphoma data set \cite{Alizadeh2000} and
the Ramaswamy data set \cite{ramaswamy01gene}, all methods yielded
the same proportion of explained variance, although the value of
$\gamma$ for Gpower needed to be tuned to achieve the specified cardinality.

\section*{Randomly generated data}
In the next set of experiments, we consider randomly generated data,
where $\Sigma = A\tr A$ with
each entry of $A \in \mathbb{R}^{m \times n}$
generated by a normal distribution with mean 0 and standard deviation 1.
For randomly generated matrices, we can study the performance as either
$m$ or $\kappa$ change.
Each result that we present is based on an average over 100 randomly
generated matrices.
In Figure~\ref{nrm} we plot the proportion of the explained variance
versus cardinality for $m = 250$ and $n = 500$.
Observe that GPBB yields a significantly better objective value
as the cardinality decreases when compared to either GPU or ConGradU, while
GPU and ConGradU have essentially identical performance.
\begin{figure}
\includegraphics[scale=0.6]{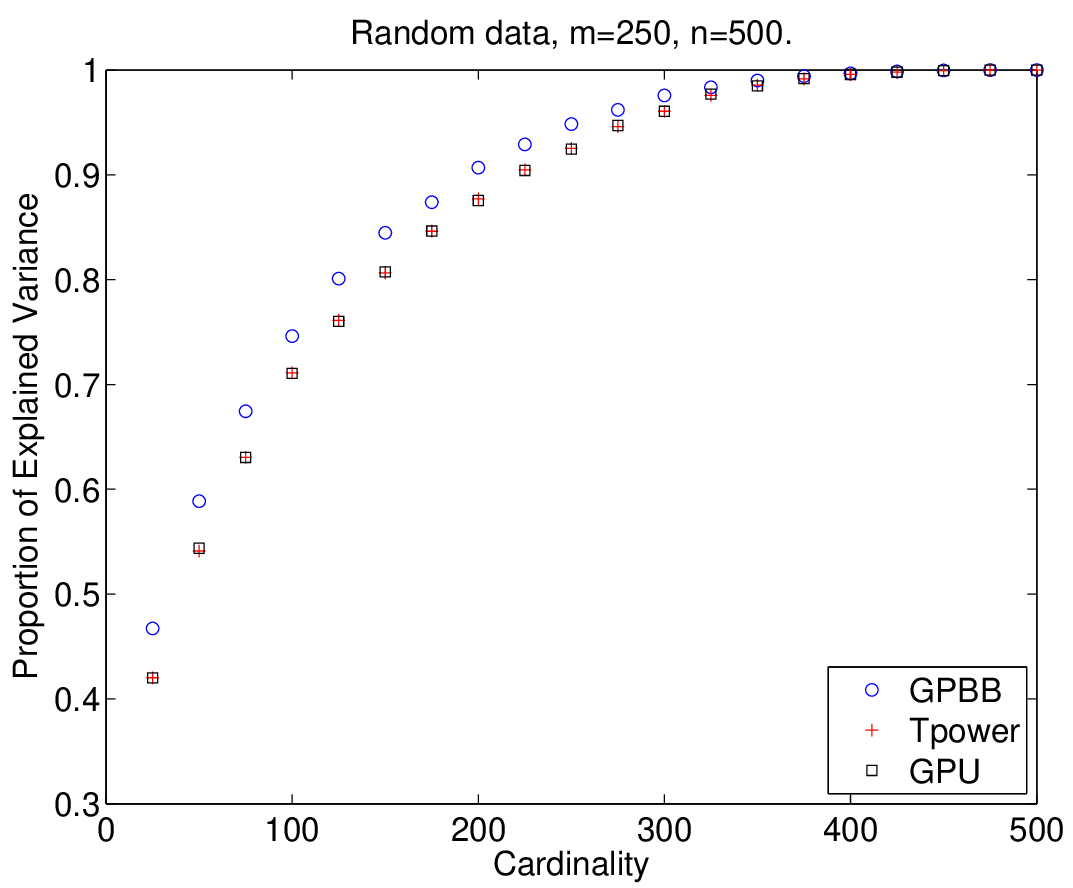}
\caption{Explained variance versus cardinality for random data set.}
\label{nrm}
\end{figure}
Even though all the algorithms seem to yield similar results in
Figure~\ref{nrm} as the cardinality approaches 500,
the convergence of the algorithms is quite different.
To illustrate this, let us consider the case where the cardinality is 500.
In this case where $\kappa = n$,
the sparse PCA problem (\ref{sparsePCA}) and the original
PCA problem (\ref{PCA}) are identical, and the solution of (\ref{PCA})
is a normalized eigenvector associated with the largest eigenvalue
$\lambda_1$ of $\Sigma$.
Since the optimal objective value is known, we can compute the relative error
\[
\frac{\left| \lambda_1^{\mbox{exact}} -\lambda_1^{\mbox{approx}} \right|}
{\left|\lambda_1^{\mbox{exact}}\right|} ,
\]
where $\lambda_1^{\mbox{approx}}$ is the approximation to the optimal
objective value generated by any of the algorithms.
In Figure~\ref{itererror} we plot the base 10 logarithm of the relative error
versus the iteration number.
Observe that GPBB is able to reduce the relative error to the machine
precision near $10^{-16}$ in about 175 iterations,
while ConGradU and GPU have relative error around $10^{-3}$
after 200 iterations.
To achieve a relative error around $10^{-16}$, ConGradU and GPU
require about 4500 iterations, roughly 25 times more than GPBB.

Despite the very nonmonotone behavior of the GPBB iterates,
the convergence is relatively fast.
The results for the explained variance in Figure~\ref{nrm}
were obtained by running either ConGradU or GPU for 6000 iterations,
while GPBB was run for 200 iterations.
Hence, the better objective values obtained by GPBB in Figure~\ref{nrm}
were due to the algorithm converging to a better solution,
rather than to premature termination of either GPU or ConGradU.
\begin{figure}
\includegraphics[scale=0.6]{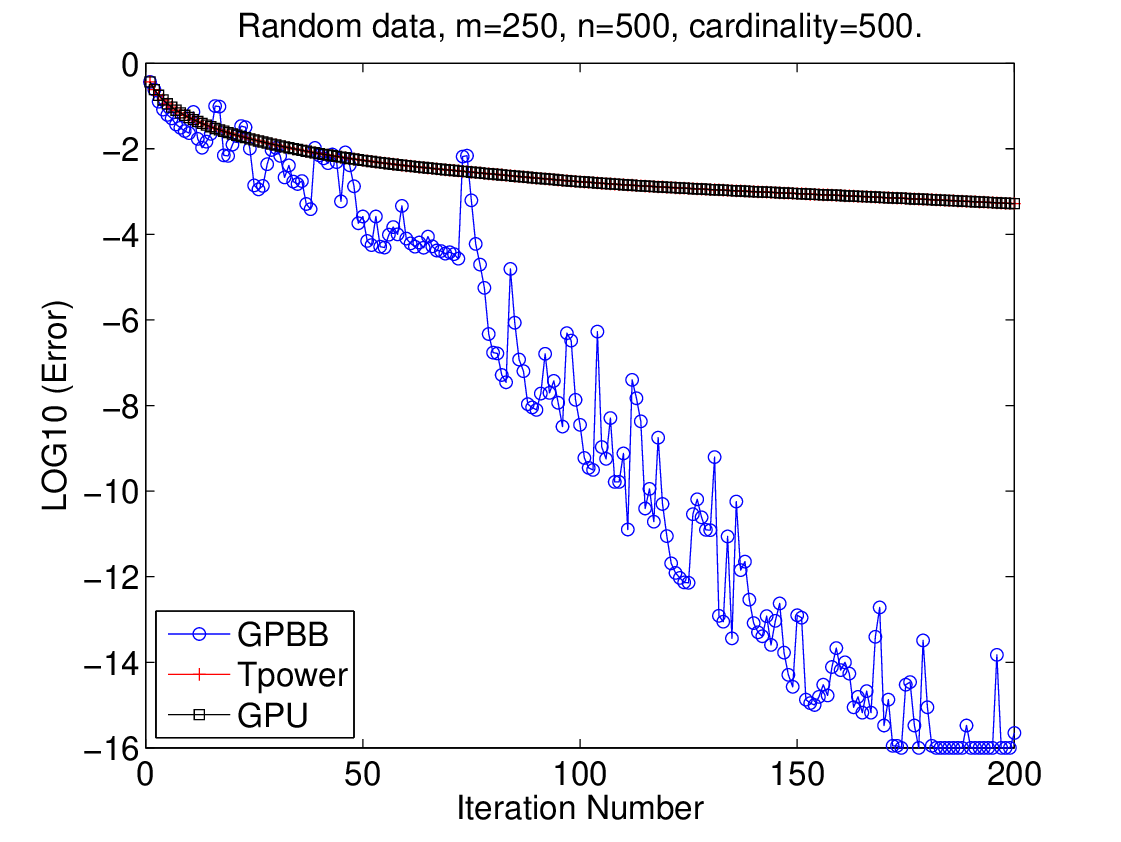}
\caption{A plot of the base 10 logarithm of the relative error versus
iteration number for the random data set with $m = 250$ and cardinality
$\kappa = 500$.}
\label{itererror}
\end{figure}

In Figure~\ref{randiter} we plot the proportion of the explained variance
versus the iteration number when $m = 250$, $n = 500$,
and the cardinality $\kappa = 50$ in the random data set.
When we plot the function value as in Figure~\ref{randiter},
it is more difficult to see the nonmonotone nature of the convergence for GPBB.
This nonmonotone nature is clearly visible in Figure~\ref{itererror}
where we plot the error instead of the function value.
In Figure~\ref{mval} we show how the explained variance depends on $m$.
As $m$ increases, the explained variance associated with GPBB becomes
much better than that of either ConGradU or GPU.
\begin{figure}
\includegraphics[scale=0.6]{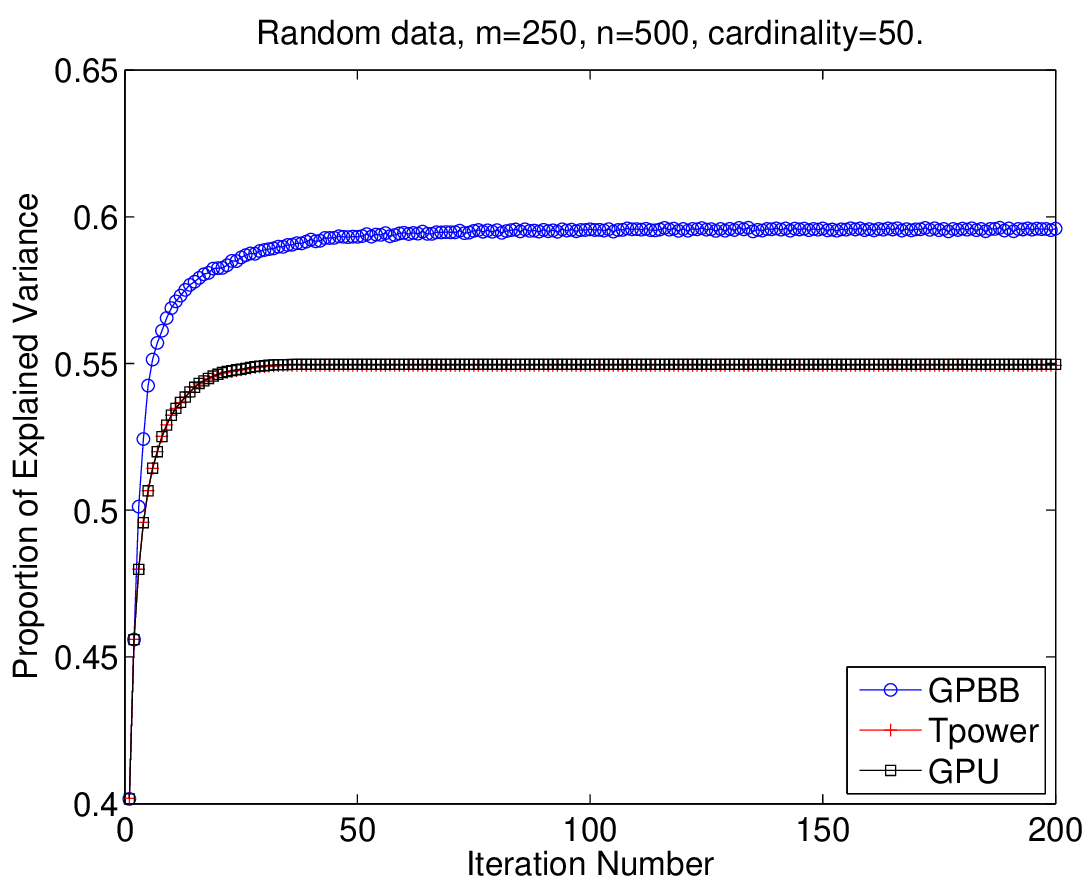}
\caption{Explained variance versus iteration number for cardinality 50 in
the random data set.}
\label{randiter}
\end{figure}
\begin{figure}
\includegraphics[scale=0.6]{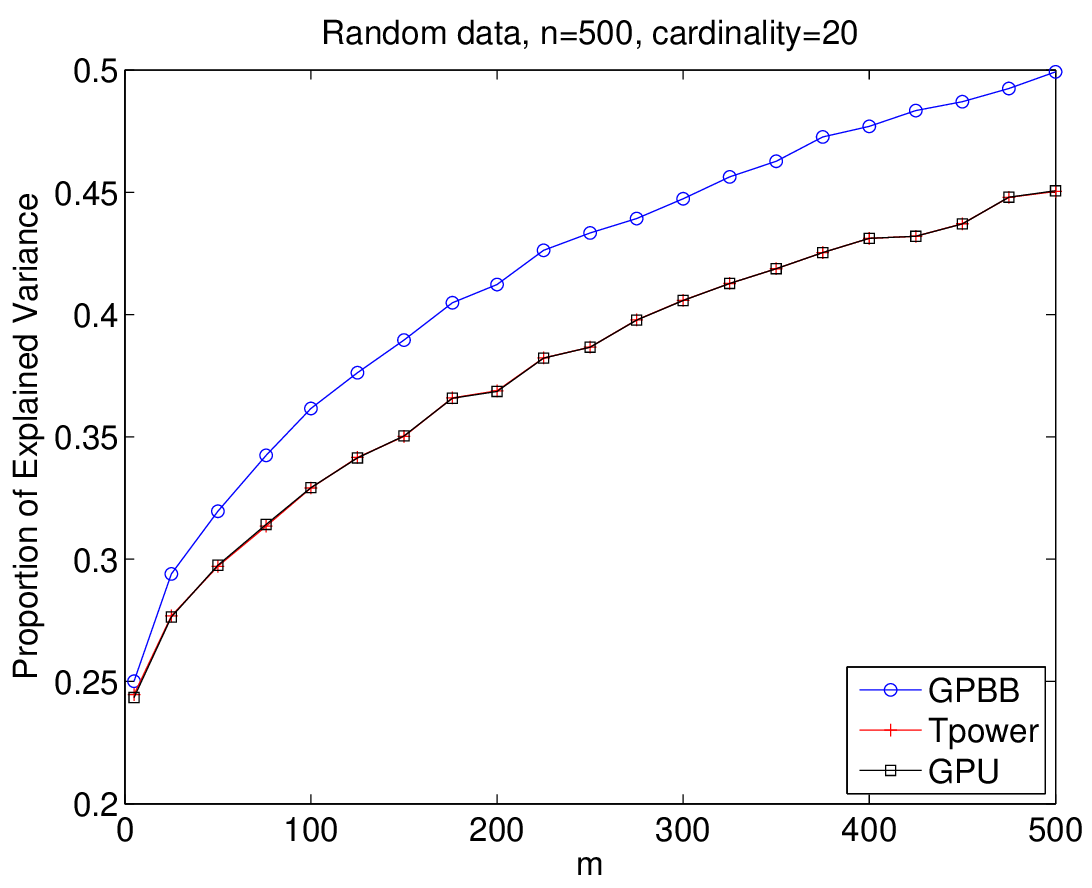}
\caption{Explained variance versus $m$ for cardinality 20 in
the random data set.}
\label{mval}
\end{figure}

In Figure~\ref{ls} we compare the relative error of GPBB for various choices
of the memory $M$.
Observe that the monotone scheme where $M = 1$ is slowest, while
$M = 50$ gives better performance than that of either a small $M$ or
$M = 0$ where $x_{k+1} \in Q_\Omega (x_k -g_k/\alpha_k^{BB})$.
Note that the running time of the monotone scheme
was about 4 times larger than that of the nonmonotone schemes
since we may need to test several choices of
$\alpha_k$ before generating a successful monotone iterate.
\begin{figure}
\includegraphics[scale=0.45]{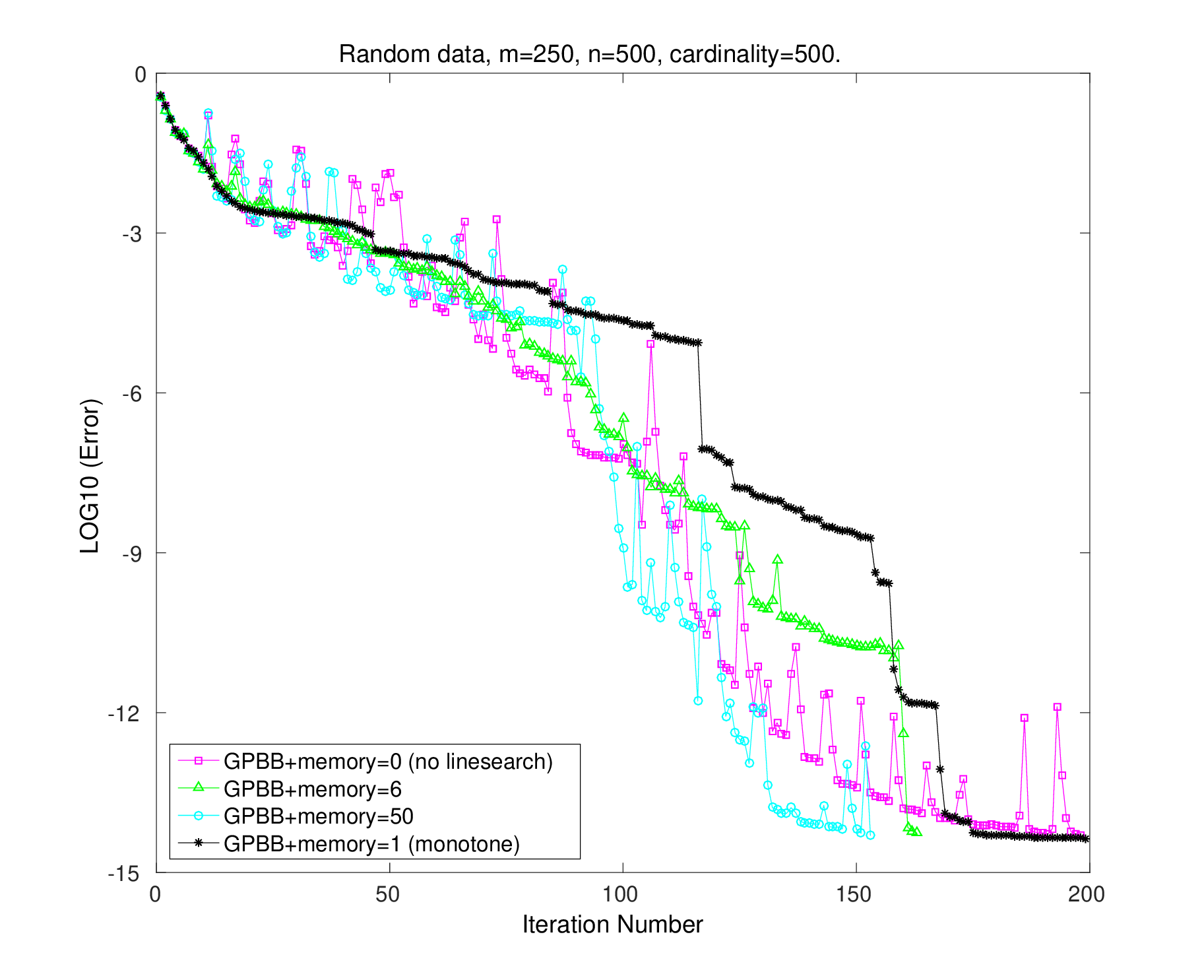}
\caption{A plot of the base 10 logarithm of the relative error versus
iteration number for the random data set with $m = 250$, cardinality
$\kappa = 500$, and various choices for the memory in GPBB.}
\label{ls}
\end{figure}\\

To compare with Gpower, we need to choose a value for $\gamma$.
We first consider a simple case $m=20, n=20$ and we use the ``default''
seed in MATLAB to generate this matrix.
The algorithms are used to extract the first principal component
with $\kappa = 5$, and with $\gamma$ tuned to achieve $\kappa = 5$.
The results in Table~\ref{simpletable} indicate that Gpower performed
similar to Tpower, but not as well as GPBB.
\bigskip
\begin{table}[t]
\caption{Simple random dataset}
\label{simpletable}
\begin{center}
{\tt
\begin{tabular}{ccc}
\hline
{\bf Method} & {\bf Cardinality} & {\bf Explained Variance}\\
\hline
GPBB & $\kappa=5$ & $0.8193$\\
Tpower(ConGradU) & $\kappa=5$ & $0.7913$\\
Gpower$_{l_1}$ & $\gamma=0.18$($\Leftrightarrow \kappa =5$) & $0.7914$\\
Gpower$_{l_0}$ & $\gamma=0.045$($\Leftrightarrow \kappa =5$) & $0.7914$\\
\hline
\end{tabular}}
\end{center}
\end{table}

In the next experiment, we consider 100 randomly generated matrices
with $m = 250$ and $n = 500$, and with
the parameter $\gamma$ in Gpower chosen to achieve an average
cardinality near 100 or 120.
As seen in Table~\ref{randtable},
Gpower$_{l_0}$ achieves similar values for the proportion of the
explained variance as Tpower,
while Gpower$_{l_1}$ achieves slightly better
results and GPBB achieves the best results.
\bigskip

\begin{table}[h]
\caption{Random data set, $m = 250$, $n = 500$.}
\label{randtable}
\begin{center}
{\tt
\begin{tabular}{ccc}
\hline
{\bf Method} & {\bf Cardinality } & {\bf Explained Variance}\\
\hline
GPBB & $\kappa =100$ & $0.7396$\\
GPBB & $\kappa =120$ & $0.7823$\\
Tpower(ConGradU) & $\kappa =100$ & $0.7106$\\
Tpower(ConGradU) & $\kappa =120$ & $0.7536$\\
Gpower$_{l_1}$ & $\gamma=0.075$(average $\kappa =99$) & $0.7288$\\
Gpower$_{l_1}$ & $\gamma=0.0684$(average $\kappa =120$) & $0.7679$\\
Gpower$_{l_0}$ & $\gamma=0.0078$(average $\kappa =100$) & $0.7129$\\
Gpower$_{l_0}$ & $\gamma=0.0066$(average $\kappa =120$) & $0.7557$\\
\hline
\end{tabular}}
\end{center}
\end{table}
\bigskip

\subsection*{Hollywood-2009 dataset, Densest $k$-subgraph (DkS)}
Given an undirected graph $G = (\C{V}, \C{E})$ with vertices
$\C{V} = \{1, 2, \ldots, n\}$ and edge set $\C{E}$,
and given an integer $k \in [1, n]$,
the densest $k$-subgraph (DkS) problem is to find a set of $k$ vertices
whose average degree in the subgraph induced by this set is as large as
possible.
Algorithms for finding DkS are useful tools for analyzing networks.
Many techniques have been proposed for solving this problem
including \cite{bhaskara2010}, \cite{khuller2009}, \cite{YeZhang2003}.
Mathematically, DkS is equivalent to a binary quadratic programming problem
\begin{equation} \label{DKS}
\max \{ \pi  \tr A \pi :
\pi \in \mathbb{R}^n, \quad \pi\in \{0,1\}^n, \quad \|\pi\|_0 = k \} ,
\end{equation}
where $A$ is the adjacency matrix of the graph;
$a_{ij} = 1$ if $(i,j) \in \C{E}$, while $a_{ij} = 0$ otherwise.
We relax the constraints
$\pi\in \{0,1\}^n$ and $\|\pi\|_0 = k$ to
$\|\pi\| = \sqrt{k}$ and $\|\pi\|_0 \le k$, and
consider the following relaxed version of (\ref{DKS})
\begin{equation} \label{DKS_relax}
\max \{ \pi  \tr A \pi : \pi \in \mathbb{R}^n, \quad \|\pi\| = \sqrt{k},
\quad \|\pi\|_0 \le k \} .
\end{equation}
After a suitable scaling of $\pi$, this
problem reduces to the sparse PCA problem (\ref{sparsePCA}).

Let us consider the Hollywood-2009 dataset \cite{BRSLLP, BoVWFI},
which is associated with a graph
whose vertices are actors in movies, and an edge joins two vertices
whenever the associated actors appear in a movie together.
The dataset can be downloaded from the following web site:
\smallskip
\begin{center}
http://law.di.unimi.it/datasets.php
\end{center}
\smallskip
The adjacency matrix $A$ is $1139905\times 1139905$.
In order to apply Gpower to the relaxed problem, we first factored
$A + cI$ into a product of the form $R\tr R$ using a Cholesky factorization,
where $c > 0$ is taken large enough to make $A + cI$ positive definite.
Here, $R$ plays the role of the data matrix.
However, one of the steps in the Gpower code updates the data matrix
by a rank one matrix, and the rank one matrix caused the
updated data matrix to exceed the 200~GB memory on the largest
computer readily available for the experiments.
Hence, this problem was only solved using Tpower and GPBB.
Since the adjacency matrix requires less than 2~GB memory, it easily
fit on our 8~GB computer.

In Table~\ref{hollywoodtbl1}, we compare the density values $\pi\tr A\pi/k$
obtained by the algorithms.
In addition, we also computed the largest eigenvalue
$\lambda$ of the adjacency matrix $A$, and give the ratio of the
density to $\lambda$.
Observe that in 2 of the 6 cases, GPBB obtained a significantly better
value for the density when compared to Tpower, while in the
other 4 cases, both algorithms converged to the same maximum.

\bigskip
\begin{table}[h]
\caption{Hollywood data set.}
\label{hollywoodtbl1}
\begin{center}
{\tt
\begin{tabular}{clcc}
\hline
{\bf Method} & \multicolumn{1}{c}{\bf Cardinality}
& {\bf Density $\pi\tr A\pi/k$}
& {\bf Ratio $\displaystyle\frac{\pi\tr A\pi/k}{\lambda}$}\\
\hline
GPBB & \quad $k=500$ & $379.40$ & $0.1688$\\
GPBB & \quad $k=600$ & $401.22$ & $0.1785$\\
GPBB & \quad $k=700$ & $593.24$ & $0.2639$\\
GPBB & \quad $k=800$ & $649.67$ & $0.2891$\\
GPBB & \quad $k=900$ & $700.38$ & $0.3116$\\
GPBB & \quad $k=1000$ & $745.95$ & $0.3319$\\
Tpower(ConGradU) & \quad $k=500$ & $190.11$ & $0.0846$\\
Tpower(ConGradU) & \quad $k=600$ & $401.21$ & $0.1785$\\
Tpower(ConGradU) & \quad $k=700$ & $436.53$ & $0.1942$\\
Tpower(ConGradU) & \quad $k=800$ & $649.67$ & $0.2891$\\
Tpower(ConGradU) & \quad $k=900$ & $700.44$ & $0.3116$\\
Tpower(ConGradU) & \quad $k=1000$ & $745.95$ & $0.3319$\\
\hline
\end{tabular}}
\end{center}
\end{table}
\bigskip

%
%

\section{Conclusion}
\label{conclusion}
The gradient projection algorithm was studied in the case where
the constraint set $\Omega$ may be nonconvex,
as it is in sparse principal component analysis.
Each iteration of the gradient projection algorithm
satisfied the condition $\nabla f(x_k)(x_{k+1}-x_k) \le 0$.
Moreover, if $f$ is concave over $\mbox{conv}(\Omega)$, then
$f(x_{k+1}) \le f(x_k)$ for each $k$.
When a subsequence of the iterates converge to $x^*$, we obtain
in Theorem~\ref{converge} the equality $\nabla f(x^*) (y^* - x^*) = 0$
for some $y^* \in P_\Omega (x^* - s^* g(x^*))$ where $P_\Omega$ projects
a point onto $\Omega$ and $s^*$ is the limiting step size.
When $\Omega$ is convex, $y^*$ is unique and the condition
$\nabla f(x^*) (y^* - x^*) = 0$ is equivalent to the first-order
necessary optimality condition at $x^*$ for a local minimizer.

The approximate Newton algorithm with a positive definite
Hessian approximation $\alpha_k$ reduced to the projected gradient algorithm
with step size $1/\alpha_k$.
On the other hand, when $\alpha_k < 0$,
as it is when the objective function is concave and the Hessian
approximation is computed by the Barzilai-Borwein formula (\ref{BB}),
the iteration amounts to taking a step along the positive
gradient, and then moving as far away as possible while staying inside the
feasible set.
In numerical experiments based on sparse principal component analysis,
the gradient projection algorithm with
unit step size performed similar to both the truncated power method
and the generalized power method.
On the other hand, in some cases, the approximate Newton algorithm
with a Barzilai-Borwein stepsize and a nonmonotone line search
could converge much faster to a better objective function value
than the other methods.
\bibliographystyle{siam}
\bibliography{library}
\end{document}